\documentclass[11pt]{article}
\usepackage{enumitem}


\usepackage{lineno}

\usepackage{amsfonts,amsmath,amsthm,amscd,amssymb,latexsym,amsbsy,pb-diagram,cite,caption,url,hyperref}

\usepackage{tikz}
\usetikzlibrary{positioning, calc, trees, decorations.markings, patterns, arrows, arrows.meta}
\usetikzlibrary{trees}

\usepackage{diagmac2}
\usepackage{float}
\usepackage{circledtext}

\textheight20cm \textwidth12cm

\newtheorem{theorem}{Theorem}[section]
\newtheorem{lemma}[theorem]{Lemma}
\newtheorem{corollary}[theorem]{Corollary}

\theoremstyle{definition}
\newtheorem{definition}[theorem]{Definition}

\theoremstyle{remark}

\newtheorem{remark}[theorem]{Remark}

\newcommand{\keywords}{\textbf{Key words. }\medskip}
\newcommand{\subjclass}{\textbf{MSC 2020. }\medskip}
\renewcommand{\abstract}{\textbf{Abstract. }\medskip}

\numberwithin{equation}{section}

\hypersetup{breaklinks=true,colorlinks=true, linkcolor=black,filecolor=black,citecolor = black,  urlcolor=blue,}

\begin{document}

\author{Oleksiy Dovgoshey, Olga Rovenska}

\title{\bf Longest paths in trees and isometricity of ultrametric spaces}

\maketitle

\begin{abstract}
Let $T$ be a tree of arbitrary finite or infinite order and let $U(T)$ be the set of all ultrametric spaces generated by vertex labelings of $T$. Let ${\bf US}$ denote the class of all ultrametric spaces generated by vertex labelings of star graphs. We prove that the inclusion $U(T)\subseteq {\bf US}$ holds if and only if the longest path in $T$ has a length not exceeding three. 
\end{abstract}

\subjclass{Primary 54E35, Secondary 05C63.}

\keywords{Double-star graph, longest path, star graph, tree, ultrametric space}

\section{Introduction}

The concept of  ultrametric spaces generated by  non-negative vertex labelings on both finite and infinite trees was introduced in \cite{Dov2020TaAoG} and investigated in \cite{DK2024DLPSSAG,DK2022LTGCCaDUS,DR2025Arxx,DV2025JMS}. 
The ultrametric spaces generated by vertex labeled star graphs are studied in \cite{DCR2025OJAC,DR2025USGbLSG,DV2025JMS,DR2025Arxx2}.

The main goal of the present paper is to describe, up to isomorphism, the structure of trees $T$ which satisfy the following condition: If $d$ is an arbitrary ultrametric generated by vertex labeling of $T$, then the ultrametc space $(V(T),d)$ is isometric to an ultrametric space generated by vertex labeling of some star graph. The structure of such trees is described in Theorem~\ref{sun}, and is the main result of the paper. 
Corollary~\ref{xk} of this theorem gives a new interconnection between star graphs and double-star graphs of arbitrary order.

\section{Preliminaries.
Metric spaces and graphs}

\hspace{5 mm} Let us denote by $\mathbb{R}^+$ the set $[0, \infty)$.

An \textit{ultrametric} on a nonempty set $X$ is a function $d\colon X\times X\rightarrow \mathbb R^+$ satisfying the following conditions for all 
 \(x\), \(y\), \(z \in X\):
\begin{itemize}[left=10pt]
\item[(i)]  $d(x,y)=d(y,x)$, the symmetry property;
\item[(ii)] $(d(x,y)=0)\iff (x=y)$, the positivity property;
\item[(iii)] 
\(d(x,y) \leq \max\{d(x,z), d(z,y)\}\), the strong triangle inequality.
\end{itemize}

\begin{definition}\label{d2.2}
Let \((X, d)\) and \((Y, \rho)\) be ultrametric spaces. A bijective mapping \(\Phi \colon X \to Y\) is called an {\it isometry} of \((X, d)\) and \((Y, \rho)\) if
\[
d(x,y) = \rho(\Phi(x), \Phi(y))
\]
holds for all \(x\), \(y \in X\).  The ultrametric spaces are said to be  {\it isometric} if there is an isometry of these spaces.
\end{definition}

 A \textit{simple graph} is a pair $(V, E)$, where $V$ is a nonempty set and $E$ is a set of two-point subsets of $V$. For a simple graph $G = (V, E)$, the sets $V = V(G)$ and $E = E(G)$ are called the \textit{vertex set} and the \textit{edge set}, respectively. If $\{x, y\} \in E(G)$, then the vertices $x$ and $y$ are called \textit{adjacent}.  In what follows we will consider the simple graphs only. The {\it order} of a graph \(G\) is the cardinal number of the set \(V(G)\).
The graph \(G\) is called \emph{finite} if the order of \(G\) is finite. Let $v$ be a vertex of $G$. The cardinal number of the set 
$\{u \in V(G) : \{u, v\} \in E(G)\}$ 
is called the {\it degree} of the vertex $v$. The degree of $v$ will be denoted as $\operatorname{deg} v$.

\begin{definition}\label{776dcg}
Let $G_1$ and $G_2$ be graphs. A bijective mapping \linebreak
$\Phi \colon V(G_1)\to V(G_2)$
is called an {\it isomorphism} of $G_1$ and $G_2$ if the equalence 
\begin{equation*}
\{u,v\}\in E(G_1)\iff \{\Phi (u),\Phi (v)\}\in E(G_2)
\end{equation*}
is true for all $u,v\in V(G_1)$. 
The graphs are said to be  {\it isomorphic} if there is an isomorphism of these graphs.
\end{definition}

Let $G$ be a graph.
A graph \(G_1\) is a \emph{subgraph} of \(G\) if
\[
V(G_1) \subseteq V(G) \quad \text{and} \quad E(G_1) \subseteq E(G).
\]
In this case we  write \(G_1 \subseteq G\).

A \emph{path} is a finite graph \(P\) whose vertices can be numbered without repetitions so that
\begin{equation}\label{e3.3-1}
V(P) = \{v_1, \ldots, v_k\} \quad \text{and} \quad E(P) = \{\{v_1, v_2\}, \ldots, \{v_{k-1}, v_k\}\}
\end{equation}
with \(k \geqslant 2\). We write \(P = (v_1, \ldots, v_k)\)  if \(P\) is a path satisfying \eqref{e3.3-1} and said that \(P\) is a \emph{path joining \(v_1\) and \(v_k\)}. 
The {\it length} of the path $P$ is the number of edges that $P$ contains. 
A path $P_1 \subseteq G$ is called to be the {\it longest path} in $G$ if the inequality  
\begin{equation*}
|E(P)| \leq |E(P_1)|
\end{equation*}
holds for every path $P \subseteq G$.

A finite graph $C$ is a \textit{cycle} if $|V(C)|\geq 3$ and there exists an enumeration of its vertices without repetition such that 
\begin{equation*}
    V(C) = \{v_1, \ldots, v_n\}, \quad
E(C) = \{\{v_1, v_2\}, \ldots, \{v_{n-1}, v_n\}, \{v_n, v_1\}\}.
\end{equation*}

A graph \(G\) is \emph{connected} if for every two distinct vertices of \(G\) there is a path \(P \subseteq G\) joining these vertices.
 A connected graph without cycles is called a \textit{tree}.

By \textit{labeled tree} we understand a pair $(T, l)$, where $T$ is a tree and $l$ is a vertex labeling
$
l \colon V(T) \to \mathbb{R}^+.
$ We will use the symbol  $T(l)$ to denote the labeled tree $(T,l)$.

Let $ T(l)$ be a labeled tree and $d_l \colon V(T) \times V(T) \to \mathbb{R}^+$ be defined as
\begin{equation}
\label{e1.1}
d_l(u, v) :=
\begin{cases}
0, & \text{if } u = v, \\
\max\limits_{w \in V(P)} l(w), & \text{if } u\neq v,
\end{cases}
\end{equation}
where $P$ is the unique path joining $u$ and $v$ in $T$.

A labeling $l \colon V(T) \to \mathbb{R}^+$ is said to be {\it non-degenerate} if the inequality 
\begin{equation*}
    \max\{l(u),l(v)\}>0
\end{equation*}
holds for every $\{u,v\}\in E(T)$.

The following theorem was proved in \cite{Dov2020TaAoG}.

\begin{theorem}
\label{t1.4}
Let $T(l)$ be a labeled tree and let $d_l $ be deined by \eqref{e1.1}. Then $d_l$ is an ultrametric on $V(T)$ if and only if
$l \colon V(T) \to \mathbb{R}^+$ is non-degenerate.
\end{theorem}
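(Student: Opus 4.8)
The plan is to verify the three defining conditions of an ultrametric for $d_l$ one at a time, and to isolate the single condition for which non-degeneracy is actually needed. First I would record that symmetry is automatic: the unique path joining $u$ and $v$ coincides with the one joining $v$ and $u$, so the two maxima in \eqref{e1.1} agree, and this uses nothing about $l$. Likewise I expect the strong triangle inequality to hold for every labeling (see below). The only condition sensitive to the values of $l$ is positivity, so the heart of the ``if and only if'' is the claim that positivity of $d_l$ is equivalent to non-degeneracy of $l$. Granting this, the theorem follows at once: if $l$ is non-degenerate then all three axioms hold and $d_l$ is an ultrametric, while if $l$ is degenerate then positivity fails and $d_l$ is not even a metric.

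Next I would prove the equivalence of positivity and non-degeneracy. Note that $d_l \geq 0$ always, since $l$ takes values in $\mathbb{R}^+$, and $d_l(u,u)=0$ holds by definition; the content of positivity is therefore $d_l(u,v)>0$ for $u \neq v$. For the forward implication, suppose $l$ is non-degenerate and take distinct $u,v$. The unique path $P = (v_1,\dots,v_k)$ joining them has $k \geq 2$, so it contains the edge $\{v_1,v_2\}$; by non-degeneracy $\max\{l(v_1),l(v_2)\} > 0$, and since $v_1,v_2 \in V(P)$ we get $d_l(u,v) = \max_{w\in V(P)} l(w) > 0$. For the converse I would argue contrapositively: if $l$ is degenerate, there is an edge $\{a,b\}$ with $l(a)=l(b)=0$; the unique path joining the adjacent vertices $a$ and $b$ is $(a,b)$ itself, whence $d_l(a,b) = \max\{l(a),l(b)\} = 0$ while $a \neq b$, so positivity fails. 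This settles the ``only if'' direction of the theorem and one of the three axioms needed for ``if''.

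The main work, and the step I expect to be the genuine obstacle, is the strong triangle inequality $d_l(x,y) \leq \max\{d_l(x,z), d_l(z,y)\}$. I would reduce it to the purely combinatorial inclusion
\[
V(P_{xy}) \subseteq V(P_{xz}) \cup V(P_{zy}),
\]
where $P_{ab}$ denotes the unique path joining $a$ and $b$; once this holds, taking $l$-maxima over both sides yields the inequality immediately, since the maximum over a union is the larger of the two maxima. To establish the inclusion I would invoke the median (confluence) vertex $m$ of three vertices in a tree: there is a unique $m$ lying on all three pairwise paths, the $x$--$y$ path decomposes as the $x$--$m$ segment followed by the $m$--$y$ segment, and these segments are shared with $P_{xz}$ and $P_{zy}$ respectively, so every vertex of $P_{xy}$ lies on $P_{xz}$ or on $P_{zy}$. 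The cases in which $x$, $y$, $z$ are not pairwise distinct are handled directly: if $x=y$ the left side is $0$, and if $z$ equals $x$ or $y$ the inequality becomes an equality. Verifying the existence and the path-decomposition property of the median vertex --- essentially acyclicity of $T$ forcing the three paths into a ``Y'' shape --- is the one place requiring care, and I would either deduce it from uniqueness of paths in a tree or cite it as a standard structural fact.
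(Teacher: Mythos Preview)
Your argument is correct and complete as a proof sketch. The positivity--non-degeneracy equivalence is handled cleanly, and the reduction of the strong triangle inequality to the vertex-set inclusion $V(P_{xy}) \subseteq V(P_{xz}) \cup V(P_{zy})$ via the median (confluence) vertex is the standard and natural route; your treatment of degenerate configurations ($x=y$, or $z\in\{x,y\}$) is also fine.

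There is, however, nothing to compare against: the present paper does not prove Theorem~\ref{t1.4} at all. It is quoted as a known result from~\cite{Dov2020TaAoG} (``The following theorem was proved in~\cite{Dov2020TaAoG}'') and used as a black box thereafter. So your proposal supplies what the paper deliberately omits. If you want to tighten the one step you flagged as requiring care, the existence and uniqueness of the median vertex $m$ can be obtained directly from uniqueness of paths in a tree: take $m$ to be the last vertex on $P_{xz}$ that also lies on $P_{xy}$; then the $x$--$m$ segment is common to $P_{xy}$ and $P_{xz}$, and concatenating the $m$--$z$ segment of $P_{xz}$ with the reverse of the $m$--$y$ segment of $P_{xy}$ gives a $z$--$y$ walk in $T$ whose reduction to a path must be $P_{zy}$, forcing $m \in V(P_{zy})$ as well.
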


Let $(X,d)$ be an ultrametric space.
In what follows we  say that $(X,d)$ is {\it generated by  labeled tree}  if  there exists a labeled tree $T(l)$ such that $X=V(T)$ and $(X,d) = (V(T), d_l)$. 

Let $L(T)$ be the set of all non-degenerate labelings defined on the vertex set of a fixed tree $T$. 
Then we use the symbol \( U(T) \) to denote the set of all ultrametric spaces \((V(T), d_l)\) with \(l \in L(T)\),
\[
U(T) := \{ (V(T), d_l) : l \in L(T) \}.
\]
If an ultrametric 
space $(X, d)$ belongs
to $U(T)$, then we say that
$(X, d)$ is {\it generated by vertex} labeling of $T$.

Let us recall  the concept of star graphs.

\begin{definition}
\label{wer}
A tree $S$ is  a \textit{star graph} if there exists a vertex $c \in V(S)$ such that $c$ is adjacent to every vertex of the set $V(S) \setminus \{c\}$, and no other edges exist.

\end{definition}

Let us denote by ${\bf US}$ the class of all ultrametric spaces generated by labeled star graphs.

The following result was proved in \cite{DR2025USGbLSG}.

\begin{theorem}\label{[2.1]}
Let $(X,d)$ be an ultrametric space. Then the following conditions are equivalent:
\begin{itemize}[left=10pt]
\item[(i)]  $(X,d) \in {\bf US}$.
\item[(ii)]  There is $x_{0} \in X$ such that the inequality $
        d(x_{0},x) \leq d(y,x) $
    holds whenever
$    
        x \neq y. 
        $
\end{itemize}
\end{theorem}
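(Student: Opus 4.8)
The plan is to prove the two implications separately by explicit constructions. For $(i)\Rightarrow(ii)$ the distinguished point $x_0$ will be the centre of the star, and for $(ii)\Rightarrow(i)$ a star graph will be built on the vertex set $X$ with centre $x_0$ and with leaf labels equal to the distances from $x_0$.

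For $(i)\Rightarrow(ii)$, let $S(l)$ be a labeled star graph with centre $c$ and $(X,d)=(V(S),d_l)$. Every path of $S$ joining two distinct vertices passes through $c$, so when $x\ne c$ the $(c,x)$-path has vertex set $\{c,x\}$, which is contained in the vertex set of the $(x,y)$-path for any $y\ne x$; by \eqref{e1.1} this yields $d(c,x)=d_l(c,x)\le d_l(x,y)=d(y,x)$, and for $x=c$ the required inequality is $0\le d(y,c)$. Thus $x_0:=c$ works, and the cases $|X|\le 2$ fit Definition~\ref{wer} and are immediate.

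For $(ii)\Rightarrow(i)$, suppose $x_0\in X$ satisfies $(ii)$. Form the star graph $S$ with $V(S)=X$ and $E(S)=\{\{x_0,x\}:x\in X\setminus\{x_0\}\}$, and define $l$ by $l(x_0)=0$ and $l(x)=d(x_0,x)$ for $x\ne x_0$. Since $d(x_0,x)>0$ whenever $x\ne x_0$, the labeling $l$ is non-degenerate, so $d_l$ is an ultrametric on $X$ by Theorem~\ref{t1.4}, and it remains to check that $d_l=d$. The equality $d_l(x_0,x)=\max\{l(x_0),l(x)\}=d(x_0,x)$ is immediate. For distinct $x,y\in X\setminus\{x_0\}$ the path joining them in $S$ is $(x,x_0,y)$, so $d_l(x,y)=\max\{l(x),l(x_0),l(y)\}=\max\{d(x_0,x),d(x_0,y)\}$; the strong triangle inequality gives $d(x,y)\le\max\{d(x_0,x),d(x_0,y)\}$, while applying $(ii)$ to the pairs $(x,y)$ and $(y,x)$ gives $d(x_0,x)\le d(x,y)$ and $d(x_0,y)\le d(x,y)$, hence the reverse inequality. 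Therefore $d_l=d$ and $(X,d)=(V(S),d_l)\in{\bf US}$.

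The whole argument reduces to a short computation, so I do not expect a serious obstacle. The only points that need care are verifying that the constructed labeling $l$ is non-degenerate, which is exactly what allows Theorem~\ref{t1.4} to apply so that $S(l)$ genuinely generates an ultrametric space, and treating the trivial low-cardinality cases, since both a single vertex and a single edge count as star graphs under Definition~\ref{wer} and both conditions then hold vacuously.
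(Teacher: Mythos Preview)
Your proof is correct. Note, however, that the paper does not actually prove Theorem~\ref{[2.1]}: it is quoted from \cite{DR2025USGbLSG} and used as a black box, so there is no proof in the paper to compare against. That said, the argument you give is the natural one and is almost certainly the one in the cited source: take the centre as the distinguished point for $(i)\Rightarrow(ii)$, and for $(ii)\Rightarrow(i)$ build the star on $X$ with centre $x_0$, label leaves by their distance to $x_0$, and recover $d$ from the strong triangle inequality together with the hypothesis. Your handling of non-degeneracy and of the cases $|X|\le 2$ is fine.
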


For the proof of the next result see Proposition 4.2 of \cite{DCR2025OJAC}.
\begin{theorem}
\label{8866gh}
Let (X,d) be an {\bf US}-space.
Then $(Y, d|_{Y \times Y})$ is also an ${\bf US}$-space for every finite non-empty $Y \subseteq X$.
\end{theorem}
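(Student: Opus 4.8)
The plan is to reduce everything to the characterization of $\mathbf{US}$-spaces in Theorem~\ref{[2.1]}. Since $(X,d)\in\mathbf{US}$, that theorem furnishes a distinguished point $x_{0}\in X$ with $d(x_{0},x)\le d(y,x)$ whenever $x\neq y$; informally, $x_{0}$ is a nearest point to every other point of $X$. To prove that a finite non-empty $Y\subseteq X$ carries an $\mathbf{US}$-metric it suffices, again by Theorem~\ref{[2.1]}, to produce a point $y_{0}\in Y$ such that $d(y_{0},x)\le d(y,x)$ for all $x,y\in Y$ with $x\neq y$.

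The natural candidate is a point of $Y$ lying as close as possible to $x_{0}$. Since $Y$ is finite and non-empty, I would pick $y_{0}\in Y$ realizing $d(x_{0},y_{0})=\min\{d(x_{0},y):y\in Y\}$. (If $x_{0}\in Y$ this simply gives $y_{0}=x_{0}$, so the cases $x_{0}\in Y$ and $x_{0}\notin Y$ are handled uniformly.) Now fix $x,y\in Y$ with $x\neq y$. If $x=y_{0}$ the required inequality is trivial because $d(y_{0},x)=0$, so assume $x\neq y_{0}$.

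Here is the single computation. By the strong triangle inequality, $d(y_{0},x)\le\max\{d(y_{0},x_{0}),d(x_{0},x)\}$. Since $x\in Y$, the minimality in the choice of $y_{0}$ gives $d(x_{0},y_{0})\le d(x_{0},x)$, so the maximum on the right equals $d(x_{0},x)$, whence $d(y_{0},x)\le d(x_{0},x)$. On the other hand, as $x\neq y$, the defining property of $x_{0}$ yields $d(x_{0},x)\le d(y,x)$. Combining the two inequalities gives $d(y_{0},x)\le d(y,x)$, which is exactly condition (ii) of Theorem~\ref{[2.1]} for the space $(Y,d|_{Y\times Y})$; hence $(Y,d|_{Y\times Y})\in\mathbf{US}$.

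I do not expect a genuine obstacle here: the statement is a two-line consequence of Theorem~\ref{[2.1]} together with the ultrametric inequality. The only points deserving attention are (a) that finiteness of $Y$ is used to guarantee the minimum defining $y_{0}$ is attained, and (b) that the minimum must be taken over $Y$ and then invoked at the point $x\in Y$ — this is the only place the hypothesis $x\in Y$ is used, and it is precisely what collapses $\max\{d(y_{0},x_{0}),d(x_{0},x)\}$ to $d(x_{0},x)$.
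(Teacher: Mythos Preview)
Your argument is correct. The paper does not actually supply a proof of this theorem; it merely cites Proposition~4.2 of \cite{DCR2025OJAC}. Your self-contained derivation from Theorem~\ref{[2.1]} and the strong triangle inequality is clean and complete, and it makes transparent exactly where finiteness of $Y$ enters (to guarantee the minimizer $y_{0}$ exists).
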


\section{The main result and lemmas}

Let us start from the following lemma.

\begin{lemma}
    \label{rl}
Let \( T \) be a tree and let 
\begin{equation}
    \label{xa}
 U(T) \subseteq {\bf US}. 
 \end{equation}
Then the inequality
\begin{equation}
    \label{xa2}
|E(P)| \le 3 
\end{equation}
holds for every path \( P \subseteq T \).
\end{lemma}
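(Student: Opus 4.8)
The plan is to establish the contrapositive: if $T$ has a path with at least four edges, then $U(T)\not\subseteq{\bf US}$. So suppose $P\subseteq T$ satisfies $|E(P)|\geq 4$; then $P$ has at least five vertices, and any five consecutive ones give a path $(v_1,v_2,v_3,v_4,v_5)\subseteq T$. The point is to exhibit a single non-degenerate labeling of $T$ whose generated ultrametric is not an ${\bf US}$-space, and the natural candidate is the ``spike in the middle'': set $l(v_1)=l(v_5)=0$, $l(v_3)=2$, and $l(v)=1$ for every other vertex of $T$. I would first verify that $l$ is non-degenerate: every edge of $T$ not contained in $\{v_1,\dots,v_5\}$ has an endpoint labeled $1$, while the only edges of $T$ inside $\{v_1,\dots,v_5\}$ are the four edges of the subpath (an edge $\{v_i,v_j\}$ with $|i-j|\geq 2$ would close a cycle with the subpath from $v_i$ to $v_j$, impossible in a tree), each of which also has an endpoint with positive label. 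Hence $d_l$ is an ultrametric on $V(T)$ by Theorem~\ref{t1.4}, and $(V(T),d_l)\in U(T)$.

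Next I would descend to the finite subspace $Y=\{v_1,v_2,v_3,v_4,v_5\}$ equipped with $\rho:=d_l|_{Y\times Y}$. Since $T$ is a tree, for $i<j$ the unique path joining $v_i$ and $v_j$ is exactly $(v_i,\dots,v_j)$, so $\rho(v_i,v_j)=\max\{l(v_i),\dots,l(v_j)\}$; reading this off the labeling gives $\rho(v_1,v_2)=\rho(v_4,v_5)=1$ and $\rho(v_i,v_j)=2$ for all other pairs $i<j$. Now I would invoke Theorem~\ref{[2.1]}: to conclude $(Y,\rho)\notin{\bf US}$ it suffices to check that no $x_0\in Y$ satisfies condition~(ii). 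For $x_0\in\{v_1,v_2\}$ this fails at the pair $x=v_4$, $y=v_5$, because $\rho(x_0,v_4)=2>1=\rho(v_5,v_4)$; for $x_0\in\{v_3,v_4,v_5\}$ it fails at the pair $x=v_1$, $y=v_2$, because $\rho(x_0,v_1)=2>1=\rho(v_2,v_1)$. Therefore $(Y,\rho)\notin{\bf US}$, and the contrapositive of Theorem~\ref{8866gh} gives $(V(T),d_l)\notin{\bf US}$, contradicting \eqref{xa}.

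There is no deep obstacle: the argument is elementary once its three ingredients are aligned. The points that need care are ensuring the labeling remains non-degenerate on \emph{all} of $T$ and not just on the chosen path (handled by the constant $1$ elsewhere, plus acyclicity of $T$ to control which edges can occur among the five chosen vertices); the small but genuine verification that \emph{every} point of the five-point space fails the criterion of Theorem~\ref{[2.1]}, since one ``good center'' would collapse the argument; and the passage from the possibly infinite space $(V(T),d_l)$ to its five-point restriction, which is precisely what Theorem~\ref{8866gh} supplies. The only real idea is the choice of the five-point configuration, and the symmetric ``$0,1,2,1,0$'' labeling is essentially forced and robust to which five vertices are picked.
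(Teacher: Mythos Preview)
Your proof is correct and follows essentially the same route as the paper's: both argue by contradiction via a five-vertex subpath, label it with a central spike so that the induced five-point ultrametric has two disjoint ``close'' pairs at the ends (you use labels $0,1,2,1,0$; the paper uses $2,2,3,2,2$, yielding the same distance pattern), extend to a non-degenerate labeling on all of $T$, and then invoke Theorem~\ref{[2.1]} and Theorem~\ref{8866gh} to reach a contradiction. Your write-up is in fact more explicit than the paper's in verifying non-degeneracy of the extended labeling and in checking that every candidate center $x_0$ fails condition~(ii).
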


\begin{proof}

Suppose that there is a path \( P_1 \subseteq T \) such that 
$
|E(P_1)| \geq 4.
$
Then there is also a path \( P_2 \subseteq P_1 \) having exactly four edges,
$
    |E(P_2)| = 4.
$
Let us define the labeling $l_2\colon V(P_2)\to {\mathbb R^+}$ as in Figure~\ref{fis1} below.

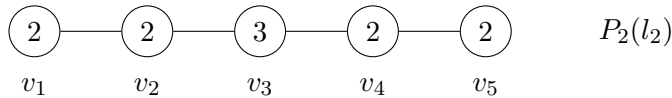
\begin{figure}[H]
\centering
\begin{center}
 \begin{tikzpicture}[scale=1, every node/.style={circle, draw, minimum size=5mm}]

 \node (F) at (0,0) {2};
 \node (G) at (1.5,0) {2};
 \node (H) at (3,0) {3};
 \node (I) at (4.5,0) {2};
 \node (J) at (6,0) {2};

 \draw (F) -- (G) -- (H) -- (I) -- (J);

 \node[draw=none, below=0mm of F] {$v_1$};
 \node[draw=none, below=0mm of G] {$v_2$};
 \node[draw=none, below=0mm of H] {$v_3$};
 \node[draw=none, below=0mm of I] {$v_4$};
 \node[draw=none, below=0mm of J] {$v_5$};

 \node[draw=none] at (8,0) {$P_2(l_2)$};
 \end{tikzpicture}
\end{center}
\caption{The vertex labeling $l_2$  of the path $P_2$.}
\label{fis1}
\end{figure}

It follows from Theorem~\ref{t1.4} that $d_{l_2} \colon V(P_2)\times V(P_2)\to\mathbb R^+$ is an ultrametric on $V(P_2)$, where $d_{l_2}$ defined by \eqref{e1.1} with $T=P_2$ and $l=l_2$. 
Moreover, using Theorem~\ref{[2.1]}, we can prove the relation 
\begin{equation}\label{xa3}
    (V(P_2),d_{l_2})\notin {\bf US}.
\end{equation}

It is easy to see that there exists a non-degenerate
\( l : V(T) \to \mathbb{R}^+ \)
such that the restriction of $l$ on the set $V(P_2)$ coincides with the labeling
\( l_2 : V(P_2) \to \mathbb{R}^+ \).
Inclusion \eqref{xa} implies the membership relation $(V(T), d_l) \in {\bf US}.$
Hence the ultrametric space $
(V(P_2), d_l \big|_{ V(P_2) \times V(P_2)})
$
also belongs to \( {\bf US}\) by Theorem~\ref{8866gh}. 
Using formula~\eqref{e1.1} it is easy to prove the equality
\[
d_{l_2} = d_l\big|_{V(P_2) \times V(P_2)}.
\]
Consequently we have
\[
(V(P_2), d_{l_2}) \in {\bf US}.
\]
The last statement contradicts relation~\eqref{xa3}. 

Inequality~\eqref{xa2} follows.
\end{proof}

Lemma \ref{rl} implies that the length of the longest path \( P^* \subseteq T \) does not exceed three if 
$
U(T) \subseteq {\bf US}.$
The inequality $|E(P^*)|\leq 3$ gives us an information about vertices $u,v\in V(T)$ whose degree is greater than one.

\begin{lemma}\label{wsdt}
Let \( T \) be a tree and let 
\begin{equation}
    \label{rfhi}
    |E(P)| \leq 3
\end{equation}
holds for every path $P\subseteq T$.  
Then 
every two distinct vertices \( u\) and \( v \) whose degree is greater than one, 
\begin{equation}
    \label{hhij}
\operatorname{deg} u \geq 2 \quad \text{and} \quad \operatorname{deg} v \geq 2,
\end{equation}
are adjacent,  
\begin{equation}
    \label{tyhj}
\{u,v\} \in E(T). 
\end{equation}
\end{lemma}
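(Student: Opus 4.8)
The plan is to argue by contradiction: suppose $u$ and $v$ both have degree at least two but $\{u,v\}\notin E(T)$, and produce a path in $T$ with at least four edges, contradicting \eqref{rfhi}. Since $T$ is a tree, there is a unique path $Q$ joining $u$ and $v$, and because $u,v$ are not adjacent, $Q$ has length at least two; write $Q = (u = w_0, w_1, \ldots, w_m, v = w_m)$ with $m \geq 2$ — more carefully, let $Q = (u, x_1, \ldots, x_{m-1}, v)$ where $m = |E(Q)| \geq 2$, so $x_1$ is the neighbour of $u$ on $Q$ and $x_{m-1}$ is the neighbour of $v$ on $Q$ (these may coincide when $m = 2$).

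The key step is to extend $Q$ at both ends. Since $\operatorname{deg} u \geq 2$, the vertex $u$ has a neighbour $u'$ different from $x_1$ (the first internal vertex of $Q$, or from $v$ itself when $m=2$); similarly, since $\operatorname{deg} v \geq 2$, the vertex $v$ has a neighbour $v'$ different from $x_{m-1}$. I then claim that $P := (u', u, x_1, \ldots, x_{m-1}, v, v')$ is a genuine path in $T$, i.e. its vertices are pairwise distinct. The vertices of $Q$ are distinct by definition of a path; $u' \notin V(Q)$ because otherwise $T$ would contain a cycle through $u$ and $u'$ (the edge $\{u,u'\}$ together with the sub-path of $Q$ from $u$ to $u'$), contradicting that $T$ is a tree; similarly $v' \notin V(Q)$; and $u' \neq v'$ because if $u' = v'$ then $\{u,u'\}$, the path $Q$ from $u$ to $v$, and $\{v,u'\}$ would again form a cycle. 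Hence $P$ is a path with $|E(P)| = m + 2 \geq 4$, contradicting \eqref{rfhi}.

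The main obstacle — really the only place that needs care — is the bookkeeping of the degenerate case $m = 2$, where $Q = (u, x_1, v)$ and the "two internal neighbours" $x_1$ coincide; one must check that $u' \neq x_1 = v'$-neighbour issues do not collapse the construction, and that the acyclicity argument still forbids $u', v' \in \{u, x_1, v\}$ and $u' = v'$. All of this follows from the tree property exactly as above, so no separate argument is needed, but it is worth stating explicitly that $u'$ is chosen distinct from the unique neighbour of $u$ lying on $Q$, and likewise for $v'$. Once $P$ is exhibited with four edges, Lemma~\ref{wsdt} follows immediately from the hypothesis \eqref{rfhi}.
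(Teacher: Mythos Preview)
Your proof is correct and follows essentially the same contradiction strategy as the paper: take the unique path $Q$ from $u$ to $v$ (of length $\geq 2$ since they are non-adjacent), use $\deg u,\deg v\geq 2$ to attach one extra edge at each end, and invoke acyclicity of $T$ to ensure the extended walk is a genuine path of length $\geq 4$. If anything you are slightly more thorough than the paper, since you explicitly verify $u'\neq v'$ via the cycle argument, a point the paper's proof leaves implicit.
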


\begin{proof}
Let distinct $u,v\in V(T)$ satisfy \eqref{hhij}.
Suppose that
\begin{equation}
    \label{rvg}
    \{u,v\} \notin E(T). 
\end{equation}
Since every tree is a connected graph, relation~\eqref{rvg} implies the existence of a path \( P^0 \subseteq T \) such that
$
P^0= (v_1, \ldots, v_n),
$
with \( v_1 =u,\) \( v_n = v \) and \( n \geq 3 \).
Using inequalities \eqref{hhij} we can find some vertices $v_0, v_{n+1}$ of $T$ such that
\begin{equation}
    \label{ds1}
\{v_0, v_1\} \in E(T), \quad 
\{v_{n}, v_{n+1}\} \in E(T)
\end{equation}
and 
\begin{equation}
     \label{ds2}
     v_0 \neq v_2, \quad v_{n+1} \neq v_{n-1}.
\end{equation}
The tree $T$ is an acyclic graph. Consequently we also have
\begin{equation}     \label{ds3}
v_0 \notin \{v_2, \ldots, v_n\}
\quad {\rm and} \quad
     v_{n+1} \notin \{v_1, \ldots, v_{n-2}\}.
\end{equation}
Now relations \eqref{ds1}--\eqref{ds3} imply that
$
P^* = (v_0, v_1, \ldots, v_n, v_{n+1})
$
is a path joining $v_0$ and $v_{n+1}$ in $T$.
It is clear that $P^0\subseteq P^*\subseteq T$ and
\begin{equation}
    \label{ds4}
|E(P^*)| = |E(P^0)| + 2. 
\end{equation}
Furthermore, the inequality
\begin{equation*}
    |E(P^0)| \geq 2
\end{equation*}
holds by the definition of the path $P^0$. The last inequality and \eqref{ds4} give us the inequality
\[
|E(P^*)| \geq 4,
\]
that contradicts \eqref{rfhi}. 

Relation \eqref{tyhj} follows. 

\end{proof}

The next lemma almost directly follows from Lemma \ref{wsdt}.

\begin{lemma}\label{etyg}
Let \( T \) be a tree and let inequality \eqref{rfhi}
hold for every path $P\subseteq T$.  Then $T$ contains at most two vertices whose degree is greater than one.
\end{lemma}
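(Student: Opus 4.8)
The plan is to argue by contradiction, using Lemma~\ref{wsdt} as a black box. Suppose that $T$ contains three pairwise distinct vertices $v_1$, $v_2$, $v_3$, each of degree greater than one. The first step is to apply Lemma~\ref{wsdt} to each of the three pairs $\{v_1,v_2\}$, $\{v_2,v_3\}$, $\{v_3,v_1\}$: inequality~\eqref{rfhi} is assumed to hold for every path in $T$, and each $v_i$ satisfies the degree hypothesis \eqref{hhij}, so the lemma yields
\[
\{v_1,v_2\}\in E(T),\qquad \{v_2,v_3\}\in E(T),\qquad \{v_3,v_1\}\in E(T).
\]

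The second step is to recognize the resulting configuration as a cycle. Since $v_1$, $v_2$, $v_3$ were chosen pairwise distinct, the subgraph on vertex set $\{v_1,v_2,v_3\}$ with the three edges displayed above matches, verbatim, the definition of a cycle with $n=3$ (take the enumeration $v_1,v_2,v_3$). Thus $T$ contains a cycle, which contradicts the fact that a tree is an acyclic graph. Hence the assumption fails, and $T$ has at most two vertices of degree greater than one.

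I do not foresee any real obstacle here; the only point requiring a moment's care is confirming that the three edges genuinely close up into a $3$-cycle rather than degenerating, but this is immediate from the pairwise distinctness of $v_1,v_2,v_3$, so no case analysis is needed. This is exactly why the statement "almost directly follows" from Lemma~\ref{wsdt}.
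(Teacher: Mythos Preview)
Your proof is correct and is essentially identical to the paper's own argument: assume three distinct vertices of degree at least two, apply Lemma~\ref{wsdt} to each pair to obtain all three edges among them, and conclude that $T$ contains a $3$-cycle, contradicting acyclicity. The only difference is notational ($v_1,v_2,v_3$ versus the paper's $u,v,w$).
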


\begin{proof}
    
Suppose that $u, v, w \in V(T)$ are distinct and that the inequality
$
\min\{\deg u, \deg v, \deg w\} \geq 2
$
holds. Then we have 
\begin{equation*}
\{\{u, v\}, \{v, w\},\{w, u\}\} \subseteq E(T)
\end{equation*}
by Lemma \ref{wsdt}.
Consequently $T$ contains the cycle $C$ with
\[
V(C) = \{u, v, w\}, \quad E(C) = \{\{u, v\}, \{v, w\}, \{w, u\}\},
\]
that is impossible by definition of trees. Thus $T$ has at most two vertices whose degree is greater than one, as was stated. 

\end{proof}

The following theorem is the main result of the present paper.

\begin{theorem}\label{sun}
Let $T$ be a tree.  
Then the following statements are equivalent:  

\begin{itemize}[left=10pt]
    \item[(i)] The inclusion  
    \begin{equation}
        \label{div1}
        U(T) \subseteq {\bf US}
       \end{equation}
    holds.   

    \item[(ii)] The inequality  
    \[
        |E(P)| \leq 3
    \]
    holds for every path $P \subseteq T$.  

    \item[(iii)] There exist at most two distinct vertices $u, v \in V(T)$ such that  
    \begin{equation}
        \label{fer1}
        \deg u \geq 2 \quad \text{and} \quad \deg v \geq 2.
        \end{equation}
\end{itemize}
\end{theorem}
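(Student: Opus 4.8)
The plan is to establish the cycle of implications $(i)\Rightarrow(ii)\Rightarrow(iii)\Rightarrow(i)$, the first two of which are already essentially done. Indeed, $(i)\Rightarrow(ii)$ is exactly Lemma~\ref{rl}, and $(ii)\Rightarrow(iii)$ is exactly Lemma~\ref{etyg}. So the only real work is the implication $(iii)\Rightarrow(i)$: assuming $T$ has at most two vertices of degree $\geq 2$, I must show that every ultrametric $d_l$ arising from a non-degenerate labeling $l$ of $T$ yields a space $(V(T),d_l)\in{\bf US}$. By Theorem~\ref{[2.1]} it suffices to exhibit a point $x_0\in V(T)$ such that $d_l(x_0,x)\le d_l(y,x)$ whenever $x\ne y$.

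First I would dispose of the degenerate shapes. If $T$ has no vertex of degree $\geq 2$, then $T$ is a single edge or a single vertex (a star), and the claim is immediate. If $T$ has exactly one vertex $c$ of degree $\geq 2$, then every other vertex has degree $1$, so $c$ is adjacent to all of $V(T)\setminus\{c\}$ and $T$ is a star graph with center $c$; hence $U(T)\subseteq{\bf US}$ by definition of ${\bf US}$. The substantive case is when $T$ has exactly two vertices $u,v$ of degree $\geq 2$. Then by Lemma~\ref{wsdt} (applicable since $(iii)$ implies $(ii)$ by the contrapositive route, or more directly since no path of length $4$ can exist — I would include this short remark) we have $\{u,v\}\in E(T)$, and every vertex other than $u,v$ has degree $1$, hence is adjacent to $u$ or to $v$. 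Thus $T$ is a \emph{double-star}: a central edge $\{u,v\}$ with some leaves hanging off $u$ and some off $v$.

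For such a double-star $T(l)$, write $a=l(u)$, $b=l(v)$, and assume without loss of generality $a\le b$ (otherwise swap the roles of $u$ and $v$). I claim $x_0=u$ works in Theorem~\ref{[2.1]}(ii). To verify $d_l(u,x)\le d_l(y,x)$ for all $x\ne y$, I would compute $d_l$ explicitly from \eqref{e1.1}: for any vertex $x$, the unique path from $u$ to $x$ passes through $u$ itself, so $l(u)=a$ is among the labels on that path; combined with the fact that for a leaf $w$ adjacent to $u$ one may need $l(w)$ as well, and that the path to a vertex on the $v$-side always contains both $u$ and $v$. The key inequality to exploit is that $a=l(u)=\max\{l(u),l(v)\}$ up to the swap is false in general — rather, since $a\le b$, any path containing $v$ has max at least $b\ge a$, and the path from $u$ to $y$ for any $y\ne u$ contains at least one edge incident to $u$, forcing $\max\{l(u),\dots\}\ge$ the relevant value. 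A clean way: for $x\ne u$, the path $P_{ux}$ from $u$ to $x$ is a subpath of the path $P_{yx}$ from $y$ to $x$ \emph{unless} $y$ lies strictly between $u$ and $x$; I would handle the "$y$ between" case separately, using that then $y\in\{v\}$ or $y$ is not between any two such vertices by the double-star structure, so $P_{yx}\subseteq P_{ux}$ fails only when $y=v$ and $x$ is on the $v$-side, in which case $d_l(u,x)=\max(a,b,l(x),\dots)$ while $d_l(v,x)=\max(b,l(x),\dots)$, and $a\le b$ makes these equal. In all other cases $V(P_{ux})\subseteq V(P_{yx})$, giving $d_l(u,x)\le d_l(y,x)$ directly from \eqref{e1.1}.

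The main obstacle I anticipate is organizing the case analysis in $(iii)\Rightarrow(i)$ cleanly: one must correctly identify, for each pair of vertices, which path contains which, and make sure the choice $x_0=u$ (with $l(u)\le l(v)$) genuinely beats every competitor $y$, including the annoying case where $y=v$ separates $u$ from $x$. Everything else is bookkeeping with formula~\eqref{e1.1} and an appeal to Theorems~\ref{t1.4}, \ref{[2.1]}, and the already-proved Lemmas~\ref{rl}, \ref{wsdt}, \ref{etyg}.
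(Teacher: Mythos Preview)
Your overall strategy coincides with the paper's: the cycle $(i)\Rightarrow(ii)\Rightarrow(iii)\Rightarrow(i)$ via Lemmas~\ref{rl} and \ref{etyg}, and for $(iii)\Rightarrow(i)$ the reduction to the double-star case, the choice $x_0=u$ with $l(u)\le l(v)$, and verification through Theorem~\ref{[2.1]}.

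There is, however, a genuine slip in your path-containment dichotomy. It is \emph{not} true that $V(P_{ux})\subseteq V(P_{yx})$ whenever $y$ fails to lie strictly between $u$ and $x$. Two counterexamples in the double-star: (a) $x$ and $y$ both leaves adjacent to $v$, so $P_{ux}=(u,v,x)$ and $P_{yx}=(y,v,x)$; (b) $x=v$ and $y$ a leaf adjacent to $v$, so $P_{ux}=(u,v)$ and $P_{yx}=(y,v)$. In neither case is $y$ on $P_{ux}$, yet $u\notin V(P_{yx})$. The correct trichotomy in a tree is: either $u\in V(P_{yx})$ (your containment), or $y\in V(P_{ux})$ (your ``$y$ between'' case), or neither; and in a double-star this third branch forces $v\in V(P_{ux})\cap V(P_{yx})$, so both distances are $\ge l(v)=b\ge a$ and the inequality $d_l(u,x)\le d_l(y,x)$ follows from $a\le b$ exactly as in the one special case you did treat. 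The paper sidesteps this by splitting instead on which of $w,z$ is adjacent to $u$ (three short cases); your argument becomes correct once you add the missing ``neither containment'' branch and handle it with the same $a\le b$ observation.
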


\begin{proof}
The implication $(i) \Rightarrow (ii)$ is true by Lemma~\ref{rl}.  
Lemma~\ref{wsdt} and Lemma~\ref{etyg} imply the validity of the implication $(ii) \Rightarrow (iii)$.  
Thus we only need to prove that statement $(iii)$ implies statement $(i)$.

Let statement $(iii)$ be true. Suppose first that we have the inequality $\deg v \leq 1$ for every $v \in T$.  
Then, using Definition~\ref{wer}, we see that $T$ is a star graph, and, consequently~\eqref{div1} holds.  
Analogously, 
$T$ is a star graph and we have~\eqref{div1}
if there exists the unique vertex $v \in T$ with $\deg v \geq 2$.

Let us consider now the case
where \( T \) contains two distinct
vertices \( u \) and \( v \)
such that \eqref{fer1} holds. First of all we note that in this case statement $(iii)$ implies membership relation 
\begin{equation}
    \label{ji1}
    \{u,v\}\in E(T).
\end{equation}
Let \( L(T) \) be the set of all non-degenerate labelings
defined on \( V(T) \).
Inclusion \eqref{div1} holds if and only if the relation 
$(V(T),d_l)\in {\bf US}$ is true for each  $l \in L(T).$
Let \( l^*\in L(T) \) be arbitrary.
Then, without loss of generality, we
can assume that the inequality
\begin{equation}
    \label{div3}
l^*(u) \leq l^*(v) 
\end{equation}
holds.
By Theorem \ref{[2.1]} the membership relation
\[
(V(T), d_{l^*}) \in {\bf US}
\]
holds if we have
\begin{equation}
    \label{div4}
d_{l^*}(u, w) \leq d_{l^*}(z, w) 
\end{equation}
whenever \( z \in V(T) \) and \( z \neq w \).

Let us consider an arbitrary \( w \in V(T) \).
If \(  w=u \) holds, then \eqref{div4} follows
from the positivity property of ultrametric spaces.

Analogously, \eqref{div4} is trivially
valid if \( z = w \).

Suppose now that \( u, w \) and $z$
are pairwise distinct. 

Let us first consider the case
\begin{equation}
    \label{div6}
\{u,v\} \in E(T) \quad \text{and} \quad \{u,z\} \in E(T).
\end{equation}

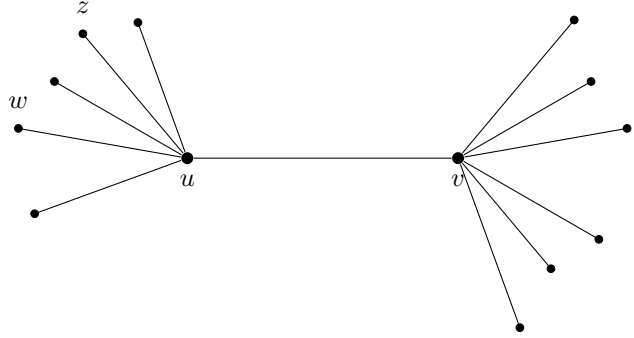
\begin{figure}[H]
\centering
\begin{center}
\begin{tikzpicture}[scale=1.2, every node/.style={font=\small}]
    \node[circle, fill=black, inner sep=1.6pt, label=below:$u$] (u) at (0,0) {};
    \node[circle, fill=black, inner sep=1.6pt, label=below:$v$] (v) at (3,0) {};

    \draw (u) -- (v);

    \foreach \ang/\len in {110/1.6,130/1.8,150/1.7,170/1.9,200/1.8} {
        \draw (u) -- ++(\ang:\len);
        \filldraw ($(u)+(\ang:\len)$) circle (1.2pt);
    }

    \coordinate (z) at ($(u)+(130:1.8)$);
    \coordinate (w) at ($(u)+(170:1.9)$);
    \node[label=above:$z$] at (z) {};
    \node[label=above:$w$] at (w) {};

    \foreach \ang/\len in {10/1.9,30/1.7,50/2.0,310/1.6,330/1.8,290/2.0} {
        \draw (v) -- ++(\ang:\len);
        \filldraw ($(v)+(\ang:\len)$) circle (1.2pt);
    }
\end{tikzpicture}

\end{center}
\caption{The vertices $w$ and $z$ are adjacent with the vertex $u$.}\label{far2}
\end{figure}

Relations \eqref{ji1} and \eqref{div6} imply that $(z,u,w)$ is a path joining $z$ and $w$ in $T$. (See Figure~\ref{far2}).
Now using~\eqref{e1.1} we obtain \eqref{div4},
\[
d_{l^*}(z, w) = \max\{ l^*(z), l^*(u), l^*(w) \}
\geq \max\{ l^*(u), l^*(w) \} = d_{l^*}(u, w).
\]

Let us consider the case
\begin{equation}
    \label{div7}
\{u,v\} \in E(T), \quad \text{but} \quad \{u,z\} \notin E(T).
\end{equation}
The last condition implies 
\begin{equation}
    \label{div8}
 \{u, w\} \in E(T) \quad \text{and} \quad \{v, z\} \in E(T).
\end{equation}

(See Figure \ref{far3} below.)

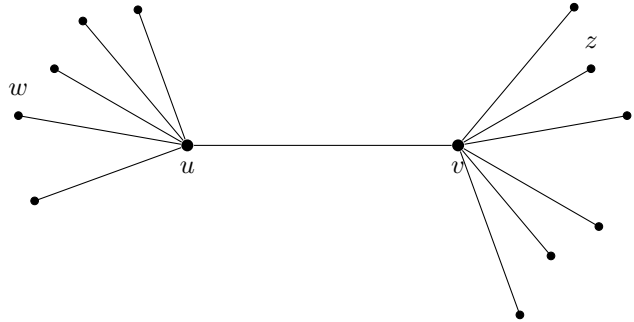
\begin{figure}[H]
\centering
\begin{center}

\begin{tikzpicture}[scale=1.2, every node/.style={font=\small}]
    \node[circle, fill=black, inner sep=1.6pt, label=below:$u$] (u) at (0,0) {};
    \node[circle, fill=black, inner sep=1.6pt, label=below:$v$] (v) at (3,0) {};

    \draw (u) -- (v);

    \foreach \ang/\len in {110/1.6,130/1.8,150/1.7,170/1.9,200/1.8} {
        \draw (u) -- ++(\ang:\len);
        \filldraw ($(u)+(\ang:\len)$) circle (1.2pt);
    }

    \coordinate (w) at ($(u)+(170:1.9)$);
    \node[label=above:$w$] at (w) {};

    \foreach \ang/\len in {10/1.9,30/1.7,50/2.0,310/1.6,330/1.8,290/2.0} {
        \draw (v) -- ++(\ang:\len);
        \filldraw ($(v)+(\ang:\len)$) circle (1.2pt);
    }

    \coordinate (z) at ($(v)+(30:1.7)$);
    \node[label=above:$z$] at (z) {};
\end{tikzpicture}

\end{center}
\caption{The vertex \( u \) is adjacent
with \( w \) but not adjacent with \( z \).}
\label{far3}
\end{figure}

Since $u$ and $w$ are adjacent by \eqref{div8}, equality \eqref{e1.1} gives us 
\begin{equation}
\label{kra1}
d_{l^*}(u,w)=\max \{l^*(u),l^*(w)\}.
\end{equation}
The path $(w,u,v,z)$ joins the vertices $w$ and $z$. Hence 
\begin{equation}
    \label{kra2}
d_{l^*}(z,w)=\max\{l^*(w),l^*(u),l^*(v),l^*(z)\}
\end{equation}
holds by \eqref{e1.1}.
Equalities \eqref{kra1} and \eqref{kra2} give us inequality \eqref{div4} for the case when \eqref{div7} holds.

To complete the proof of inequality \eqref{div4} if it sufficient to consider the case 
\begin{equation}
\label{kra3}
    \{u,w\}\notin E(T) \quad \text{and}\quad \{u,z\}\notin E(T).
\end{equation}
As in the case when \eqref{div7} holds we can rewrite \eqref{kra3} in the form
\begin{equation}
    \label{kra4}
    \{v,w\}\in E(T)\quad \text{and}\quad \{w,z\}\in E(T).
\end{equation}

(See Figure~\ref{far4} below).

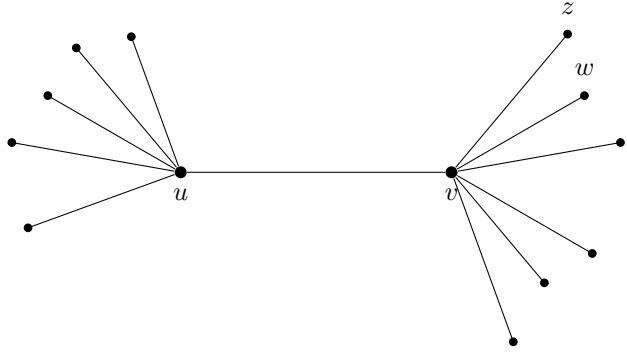
\begin{figure}[H]
\centering
\begin{center}
\begin{tikzpicture}[scale=1.2, every node/.style={font=\small}]
    \node[circle, fill=black, inner sep=1.6pt, label=below:$u$] (u) at (0,0) {};
    \node[circle, fill=black, inner sep=1.6pt, label=below:$v$] (v) at (3,0) {};
    \draw (u) -- (v);

    \foreach \ang/\len in {110/1.6,130/1.8,150/1.7,170/1.9,200/1.8} {
        \draw (u) -- ++(\ang:\len);
        \filldraw ($(u)+(\ang:\len)$) circle (1.2pt);
    }

    \foreach \ang/\len in {10/1.9,30/1.7,50/2.0,310/1.6,330/1.8,290/2.0} {
        \draw (v) -- ++(\ang:\len);
        \filldraw ($(v)+(\ang:\len)$) circle (1.2pt);
    }

    \coordinate (w) at ($(v)+(30:1.7)$);
    \node[label=above:$w$] at (w) {};

    \coordinate (z) at ($(v)+(50:2.0)$);
    \node[label=above:$z$] at (z) {};
\end{tikzpicture}
\end{center}
\caption{The vertices $w$ and $z$ are adjacent with vertex $v$.}
\label{far4}
\end{figure}

Since $(z,v,w) $ is a path in $T$, \eqref{kra4} and \eqref{e1.1} imply 
\begin{equation}
    \label{kra5}
    d_{l^*}(z,w)=\max\{l^*( z),l^*(v),l^*(w)\}.
\end{equation}

Analogously, we obtain
\begin{equation}
    \label{kra6}
    d_{l^*}(u,w)=\max\{l^*( u),l^*(v),l^*(w)\}
\end{equation}
because $(u, v, w)$ is a path in $T$.
Inequality \eqref{div4} follows from \eqref{div3}, \eqref{kra5} and \eqref{kra6}.

The proof is completed.

\end{proof}

Recall that a tree $T$ is called a \textit{double-star} graph if $T$ contains exactly two distinct vertices whose degrees are greater than one.

Theorems \ref{sun} implies the following corollary.

\begin{corollary}
    \label{xk}
Let $T$ be a tree.  
Then the inclusion $U(T) \subseteq {\bf US}$ holds if and only if $T$ is isomorphic either to a star-graph or to a double-star graph.
\end{corollary}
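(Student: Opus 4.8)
The plan is to read off the corollary from the equivalence $(i)\Leftrightarrow(iii)$ of Theorem~\ref{sun}: the inclusion $U(T)\subseteq{\bf US}$ holds precisely when $T$ has at most two vertices of degree $\geq 2$. Hence it suffices to establish the purely combinatorial fact that a tree $T$ has at most two vertices of degree $\geq 2$ if and only if $T$ is isomorphic to a star graph or to a double-star graph.

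For one direction, suppose $T$ is a star graph or a double-star graph. If $T$ is a double-star graph, then by definition it has exactly two vertices of degree greater than one, so $(iii)$ holds. If $T$ is a star graph with center $c$, then every vertex of $V(T)\setminus\{c\}$ has degree one, so $c$ is the only vertex that can have degree $\geq 2$, and $(iii)$ again holds.

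For the converse, assume $T$ has at most two vertices of degree $\geq 2$. If it has exactly two such vertices, then $T$ is a double-star graph by definition. Otherwise $T$ has at most one vertex of degree $\geq 2$, and I claim $T$ is a star graph. This is immediate when $|V(T)|\leq 2$, since then $T$ is $K_1$ or $K_2$, both of which are star graphs. When $|V(T)|\geq 3$, connectedness forces the existence of a vertex $c$ with $\deg c\geq 2$, and every $w\in V(T)\setminus\{c\}$ has degree one. If the unique neighbour $w'$ of such a $w$ were different from $c$, then $w'$ would also have degree one, so $\{w,w'\}$ would form a connected component of $T$ not containing $c$ --- contradicting the connectedness of $T$. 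Thus every vertex other than $c$ is adjacent to $c$; and since $T$ is acyclic there is no edge between two neighbours of $c$. Hence the only edges of $T$ are those incident to $c$, i.e.\ $T$ is a star graph with center $c$.

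Combining these two directions with Theorem~\ref{sun} yields the corollary. I do not anticipate a real obstacle; the only step that needs a moment's care is the connectedness argument in the last case, which rules out a leaf of $T$ being attached to another leaf once $T$ has a vertex of degree $\geq 2$.
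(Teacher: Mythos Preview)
Your argument is correct and follows the paper's approach: the paper simply states that the corollary is implied by Theorem~\ref{sun} without spelling out the combinatorial equivalence between condition~(iii) and being a star or double-star, while you supply exactly those details. The connectedness argument you flag is fine and handles the infinite-order case as well, so nothing is missing.
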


\begin{remark}
 Some results describing the properties of double-star graphs can be found in \cite{Akbari,Kart,Priy,Venk}.

\end{remark}

\section*{Declarations}

\subsection*{Declaration of competing interest}

 The authors declare no conflict of interest.

\subsection*{Data availability}

 All necessary data are included into the paper.

\subsection*{Funding}

First author was supported by grant $359772$ of the Academy of Finland.\\


\bibliographystyle{plainurl}
\bibliography{Longest}

\bigskip

CONTACT INFORMATION

\medskip
Oleksiy Dovgoshey\\
Department of Function Theory, Institute of Applied Mathematics and Mechanics of NASU, Slovyansk, Ukraine,\\
Department of Mathematics and Statistics, University of Turku, Turku, Finland \\
oleksiy.dovgoshey@gmail.com, oleksiy.dovgoshey@utu.fi

\medskip
Olga Rovenska\\
Department of Mathematics and Modelling, Donbas State Engineering Academy, Kramatorsk, Ukraine\\
rovenskaya.olga.math@gmail.com

\end{document}